\documentclass[a4paper,12pt]{amsart}

\usepackage[utf8]{inputenc}
\usepackage{amsmath, amsthm, amsfonts, amssymb}
\usepackage[foot]{amsaddr}

\usepackage{todonotes}
\usepackage{hyperref} 
\usepackage[mathlines, pagewise]{lineno} 

 \newtheorem{theorem}{Theorem}[section]

 \newtheorem{remark}[theorem]{Remark}
 \numberwithin{equation}{section}

\usepackage{amsmath}
 \usepackage[mathscr]{euscript}

\title[The best $m$-term trigonometric approximations]{The best $\mathbf m$-term trigonometric approximations of the classes of periodic functions of one and many variables \\in the space $B_{q,1}$}

\author{K.V.~Pozharska$^{1, 2}$, A.S.~Romanyuk$^1$
}


\address{
$^1$Institute of Mathematics of the NAS of Ukraine, Kyiv, Ukraine
$^2$Faculty of Mathematics, Chemnitz University of Technology, 
Germany}

\email{pozharska.k@gmail.com}
\email{romanyuk@imath.kiev.ua}

\allowdisplaybreaks

\keywords{
Nikol'skii-Besov classes, Sobolev classes, best $m$-term trigonometric approximation}

\begin{document}

\begin{abstract}
Exact order estimates are obtained of the best $m$-term trigonometric approximations of the Nikol'skii-Besov classes $B^r_{p, \theta}$ of periodic functions of one and many variables in the space $B_{q,1}$. In the univariate case ($d=1$), we get the orders of the respective approximation characteristics on the classes $B^r_{p, \theta}$ as well as on the Sobolev classes $W^r_{p, {\boldsymbol{\alpha}}}$ in the space $B_{\infty,1}$ in the case $1\leq p \leq \infty$.
\end{abstract}

\maketitle

\setcounter{tocdepth}{1}


\setlength{\parskip}{\medskipamount}

  \section{Introduction.}
In the paper we investigate the best $m$-term trigonometric approximations of the Nikol'skii-Besov classes $B^r_{p, \theta}$ of periodic functions of one and many variables in the space $B_{q,1}$. The norm in this class is more strong than the corresponding $L_q$-norm.
 In the univariate case ($d=1$), we investigate a behaviour of the 
 mentioned approximation characteristics on the classes $B^{r}_{p, \theta}$ as well as on the Sobolev classes $W^{r}_{p, \alpha}$ in the space $B_{\infty,1}$.

The obtained results, from the one hand side generalize the respective statements for the Lebesque space $L_q$ \cite{Bel_87,Bel_88,Bel_98,DeVore_Teml_95,Dung_98,Dung_00,Rom_06,Rom_07,Stasyuk_14,Temlyakov_98}. From the other hand side, they complement the estimates of this quantity on the classes $B^r_{p, \theta}$ in the space $B_{q,1}$ \cite{Romanyuk_Romanyuk_Pozharska_Hembarska2023}.
We will comment on this more in remarks to the obtained results.

Let us further define the  classes $B^r_{p, \theta}$, the spaces  $B_{q,1}$  and the approximation characteristics.

Let $\mathbb{R}^d$, $d\geq 1$,  be the $d$-dimensional Euclidean space of points $\boldsymbol{x}=(x_1, \dots, x_d)$, and $L_p(\mathbb{T}^d)$,  
$1 \leq p \leq \infty$, $\mathbb{T}^d :=  \prod^{d}_{j=1}\,[ 0,\,2\pi)$,
be the space of functions  $f(\boldsymbol{x})$ that are
 $2\pi$-periodic in  each variable and
their norm
\begin{align*}
    \|f\|_p &:=    
   \Big((2\pi)^{-d}
\int_{\mathbb{T}^d}|f(\boldsymbol{x})|^p\,d{\boldsymbol{x}}\Big)^{1/p},\quad 1 \leq p < \infty,
\\
 \|f\|_\infty &:= 
 \operatorname{ess\,sup}\limits_{{\boldsymbol{x}}\in  {\mathbb{T}^d}}|f(\boldsymbol{x})|
  \end{align*}
  is finite.

Furthermore, let $k\in \mathbb{N}$ and $\boldsymbol{h}\in \mathbb{R}^d$.
For $f\in L_p(\mathbb{T}^d)$, we denote 
$\Delta_{\boldsymbol{h}} f(\boldsymbol{x}) = f(\boldsymbol{x}+\boldsymbol{h})- f(\boldsymbol{x})$, and define the difference of order $k$ with a step $\boldsymbol{h}$ by the following formula
$$
\Delta_{\boldsymbol{h}}^k f(\boldsymbol{x}) = \Delta_{\boldsymbol{h}}\Delta_{\boldsymbol{h}}^{k-1} f(\boldsymbol{x}), \quad
\Delta_{\boldsymbol{h}}^0 f(\boldsymbol{x}) = f(\boldsymbol{x}). 
$$

The modulus of smoothness of order $k$ for the function $f\in L_p(\mathbb{T}^d)$ is defined by the formula
$$
\omega_k(f,t)_p:= \sup_{|\boldsymbol{h}|\leq t}  \| \Delta_{\boldsymbol{h}}^k f \|_p, \ \text{where} \ |\boldsymbol{h}|=\sqrt{h_1^2+\dots+h_d^2}, \ t\in \mathbb{R}, \ t> 0.
$$

We say that the function $f\in L_p(\mathbb{T}^d)$ belongs to the space 
$B^r_{p, \theta}$, $1\leq p, \theta\leq\infty$, 
$r>0$, if
$$
\Bigg( 
\int_0^\infty (t^{-r} \omega_k(f,t)_p)^\theta \frac{{\rm d}t}{t}   
\Bigg)^{1/\theta} 
< \infty, \quad 1 \leq \theta < \infty
$$
and
$$
\sup_{t>0} t^{-r} \omega_k(f,t)_p< \infty, \quad \theta=\infty.
$$
The norm of the space $B^{r}_{p, \theta}$ is defined by the formulas
$$
 \|f\|_{B^{r}_{p, \theta}} :=  \|f\|_p + \Bigg( \int_0^\infty (t^{-r} \omega_k(f,t)_p)^\theta \frac{{\rm d}t}{t}   \Bigg)^{1/\theta}, \quad
 1 \leq \theta < \infty,
$$
and 
$$
 \|f\|_{B^{r}_{p, \infty}} := \|f\|_p + \sup_{t>0} t^{-r} \omega_k(f,t)_p, \quad
\theta=\infty,
$$
for some $k>r$.

The spaces $H^r_p \equiv B^{r}_{p, \infty}$ and $B^{r}_{p, \theta}$,
 $1 \leq \theta < \infty$, were introduced by S.M.~Nikol'skii \cite{Nikol'skii1951} and O.V. Besov \cite{Besov1961}, respectively. 
  
The Nikol'skii-Besov class is defined as a unit ball in the space $B^{r}_{p, \theta}$. We will use for it the same notation, i.e., put 
$$
B^r_{p,\theta} := 
\{f \in B^r_{p,\theta}\colon \ \|f\|_{B^r_{p,\theta}} \leq 1 \}.
$$

Note that the classes $B^{r}_{p, \theta}$ and $H^{r}_{p}$ were studied from the approximation viewpoint in \cite{Bel_87,Bel_98,Dung_98,Jiang_Liu_00,Rom_09,Rom_13,Romanyuk_Romanyuk2009,Romanyuk_Romanyuk2012}, where additional relevant references can be found. In what follows, it will be convenient for us to use the equivalent (up to absolute constants) definition for the norm in the space $B^{r}_{p, \theta}$.

Let   $V_l(t)$, $l \in \mathbb{N}$, $t\in \mathbb{R}$, denotes the  de la Vall\'ee-Poussin kernel  of  the  form
$$
V_l(t) := l^{-1} \sum\limits^{2l-1}_{k=l} D_k(t),
$$
where 
$$
D_k(t):= \sum_{m=-k}^k e^{imt}
$$
is the Dirichlet kernel.

The multidimensional kernel $V_l(\boldsymbol{x})$, $l \in \mathbb{N}$, $\boldsymbol{x}\in \mathbb{R}^d$, is defined by the formula
$$
V_l(\boldsymbol{x}):=\prod_{j=1}^d V_l(x_j).
$$

Let us put 
$$
 V_l(f,\boldsymbol{x}):= (f\ast V_l)(\boldsymbol{x}),
$$
i.e., define a
convolution of the function $f\in L_1(\mathbb{T}^d)$ with the multidimensional kernel 
$V_l(\boldsymbol{x})$.

For $f\in L_1(\mathbb{T}^d)$, we set 
\begin{align*}
    \sigma_0(f) & :=  \sigma_0(f, \boldsymbol{x}) = V_l(f, \boldsymbol{x}),
    \\
     \sigma_s(f) & :=  \sigma_s(f, \boldsymbol{x})
     = V_{2^s}(f, \boldsymbol{x})-V_{2^{s-1}}(f, \boldsymbol{x}), \ s\in \mathbb{N}.
\end{align*}

Then for the function $f\in B^{r}_{p, \theta}$, $1 \leq p,\theta \leq \infty$, $r>0$, the following equivalences hold \cite{Lizorkin_68}
\begin{align}
   \|f\|_{B^r_{p,\theta}} &\asymp  \left( \sum_{s\in \mathbb{Z}_+}\ 2^{sr\theta}  \| \sigma_s(f)  \|_p^\theta \right)^{1/\theta} , \quad 1 \leq \theta < \infty,
    \label{n1.1}
    \\
     \|f\|_{B^r_{p,\infty}} & \asymp \sup_{s\in \mathbb{Z}_+}  2^{sr} \| \sigma_s(f)  \|_p, \quad \theta=\infty,
     \label{n1}
\end{align}
where $\mathbb{Z}_+ = \mathbb{N}\cup \{0\}$. Here and below, for positive sequences $a(n)$ and $b(n)$, $n\in \mathbb{N}$, we will use the notation $a(n)\asymp b(n)$, that means that there exist the constants $0<C_1<C_2$ such that $C_1 a(n) \leq b(n) \leq C_2 a(n)$. If only $b(n) \leq C_2 a(n)$ (or $b(n) \geq C_1 a(n)$) holds, then we will write $b(n)\ll a(n)$ ($b(n \gg a(n))$).

Note, that in the case $1<p<\infty$ one can write an equivalent to (\ref{n1.1})-(\ref{n1}) norm definition, using some unions of ``blocks'' of the Fourier series of the function $f$ instead of $\sigma_s(f)$.
For $f\in L_p(\mathbb{T}^d)$, $1<p<\infty$, we define 
$$
f_{(0)}:= f_{(0)}(\boldsymbol{x}) = f(0), \ f_{(s)}:= f_{(s)}(\boldsymbol{x})= 
\sum_{\boldsymbol{k}\in \mu(s)}
 \widehat f(\boldsymbol{k}) e^{i(\boldsymbol{k}, \boldsymbol{x})}, \ s\in \mathbb{N},
$$
where
  $$
  \mu(s):= \{ \boldsymbol{k}\in \mathbb{Z}^d\colon \ 2^{s-1} \leq \max_{j=1,\dots, d} |k_j| < 2^s \},
  $$
$(\boldsymbol{k}, \boldsymbol{x})= k_1 x_1+\cdots+ k_d x_d$ and 
$\widehat f(\boldsymbol{k}) = (2\pi)^{-d} \int_{\mathbb{T}^d} f(\boldsymbol{t}) e^{-i(\boldsymbol{k}, \boldsymbol{t})} {\rm d} \boldsymbol{t}$
are the Fourier coefficients of $f$.

Then, if $1<p<\infty$, $1 \leq \theta \leq \infty$, $r>0$, for $f\in B^{r}_{p, \theta}$ we have
\begin{align*}
   \|f\|_{B^r_{p,\theta}} &\asymp  \left( \sum_{s\in \mathbb{Z}_+}\ 2^{sr\theta}  \| f_{(s)}  \|_p^\theta \right)^{1/\theta} , \quad 1 \leq \theta < \infty,
    \\
     \|f\|_{B^r_{p,\infty}} & \asymp \sup_{s\in \mathbb{Z}_+}  2^{sr} \| f_{(s)}  \|_p, \quad \theta=\infty.
    \end{align*}

Note, that for the spaces $B^r_{p,\theta}$ the following embeddings hold
$$
B^{r}_{p, 1} \subset B^{r}_{p, \theta_1} \subset B^{r}_{p, \theta_2} \subset B^{r}_{p, \infty} \equiv H^r_p,  \quad 1\leq \theta_1\leq \theta_2\leq \infty.
$$

Now we define a norm $\|\cdot\|_{B_{q,1}}$ in the subspaces $B_{q,1}$, $1 \leq q \leq \infty$, of functions $f\in L_q({\mathbb{T}}^d)$. Such norm is similar to the decomposition norm of functions from the Besov spaces $B^{r}_{p, \theta}$ (see \eqref{n1.1}). So, for trigonometric
polynomials $t$ with respect to the multiple trigonometric system  $\{ e^{i(\boldsymbol{k},\boldsymbol{x})}\}_{\boldsymbol{k} \in \mathbb{Z}^d}$, the norm $\|t\|_{B_{q,1}}$ is defined by the formula
$$
\|t\|_{B_{q,1}} := \sum_{s
\in \mathbb{Z}_+
} \|\sigma_{s}(t)\|_q.
$$
Analogically
we define the norm $\|f\|_{B_{q,1}}$
 for any function $f \in L_q(\mathbb{T}^d)$, such that the series
 $\sum_{s \in \mathbb{Z}_+}\|\sigma_{s}(f)\|_q$  is convergent.

Note, that in the case $1 < q < \infty$ it holds
$$
  \|f\|_{B_{q,1}} \asymp \sum\limits_{s \in \mathbb{Z}_+}\|f_{(s)}\|_q.  
$$

For $f \in B_{q,1}$, $1 \leq q \leq \infty$, the following relations hold:
\begin{equation}\label{n2}
    \|f\|_q \ll \|f\|_{B_{q,1}}; \qquad \|f\|_{B_{1,1}} \ll \|f\|_{B_{q,1}} \ll \|f\|_{B_{\infty, 1}}.
\end{equation}

Further we define the approximation characteristics that will be investigated in what follows and a close to it quantity that will appear in remarks.

Let  $\mathcal{X}$  be  a normed  space with the norm $\|\cdot\|_{\mathcal{X}}$ and $\Theta_m$ be a set of $m$ arbitrary $d$-dimensional vectors with integer coordinates. Let us denote by
$$
P(\Theta_m):= P(\Theta_m, \boldsymbol{x}) = 
\sum_{\boldsymbol{k}\in \Theta_m} c_{\boldsymbol{k}} e^{i(\boldsymbol{k}, \boldsymbol{x})}, \ c_{\boldsymbol{k}} \in \mathbb{C}, 
$$
a trigonometric polynomial with ``numbers'' of harmonics from the set $\Theta_m$. For $f\in \mathcal{X}$, we consider the quantity
$$
e_m(f)_{\mathcal{X}}:=\inf\limits_{c_{\boldsymbol{k}}}\inf\limits_{\Theta_m}
{\left\|f-  P(\Theta_m)\right\|_{\mathcal{X}}},
$$
which is called the best $m$-term
 trigonometric approximation of $f$. For a class
$F\subset {\mathcal{X}}$, we set  
$$
e_m(F)_{\mathcal{X}}:=\sup\limits_{f\in F}{e_m(f)_{\mathcal{X}}}.
$$

The quantity $e_m(f)_2:=e_m(f)_{L_2(\mathbb{T})}$ for univariate functions was introduced by S.B.~Stechkin \cite{Stechkin_1955} in order to formulate a criterion of absolute convergence of orthogonal series in the general case of approximations by polynomials with respect to arbitrary orthogonal system in a Hilbert space. 

Further the quantities $e_m(F)_{\mathcal{X}}$ for certain functional classes and spaces $\mathcal{X}=L_q(\mathbb{T}^d)$, $d\geq 1$, as well as other normed spaces, were studied by many authors. The detailed overview can be found in the papers \cite{Akishev_10,Basarkhanov_2014,Bel_87,Bel_88,Bel_98,DeVore_Teml_95,Dung_98,Dung_00,Hansen_Sickel_12,Kashin_Temlyakov1994,Rom_03,Rom_06,Rom_07,Romanyuk_Romanyuk_Pozharska_Hembarska2023,Shvai_16,Shydlich2009,Stasyuk_14,Temlyakov_98} and the monographs \cite{Dung_Temlyakov_Ullrich2019,Romanyuk2012,Temlyakov1993,Trigub_Belinsky2004}. 

As it was indicated above, when commenting the obtained results,  we will refer to the estimates of a close to  $e_m(F)_{\mathcal{X}}$ approximation characteristics.

For $f\in \mathcal{X}$, we denote
$$
S_{\Theta_m}(f):=S_{\Theta_m}(f,\boldsymbol{x})=
\sum_{\boldsymbol{k}\in \Theta_m}{\widehat{f}}(\boldsymbol{k} ){e^{i(\boldsymbol{k}, \boldsymbol{x})}} 
$$
and consider the quantity 
$$
e_m^\bot(f)_{\mathcal{X}}:=\inf\limits_{\Theta_m}{\|f-S_{\Theta_m}(f)\|}_{\mathcal{X}}.
  $$
  If $F\subset {\mathcal{X}}$ is a functional class, then we put  
$$
e_m^\bot(F)_{\mathcal{X}}:=\sup\limits_{f\in F}{e^\bot_m(f)_{\mathcal{X}}}.
$$

The quantity $e_m^\bot(F)_{\mathcal{X}}$ is called the best orthogonal trigonometric approximation of the class $F$ in the space $\mathcal{X}$. The quantities $e_m^\bot(F)_{\mathcal{X}}$  for different functional classes $F$ in the Lebesque space $L_q(\mathbb{T}^d)$ as well as in some of their subspaces were investigated in many papers (see, e.g., \cite{Fedunyk_Hembarska2022,Rom_02,Romanyuk_Romanyuk2020,Romanyuk_Yanchenko2022,Stepanyuk_2020}), where one can find a more detailed bibliography.

Note, that from the definitions of the quantities $e_m(F)_{\mathcal{X}}$ and $e_m^\bot(F)_{\mathcal{X}}$ we get the following relation
\begin{equation*}\label{n3}
    e_m(F)_{\mathcal{X}} \leq e_m^\bot(F)_{\mathcal{X}}.
\end{equation*}

\section{Estimates of the quantities $e_m(B^{r}_{p, \theta})_{B_{q,1}}$ ($d\geq 1$)}
In this part of the paper we prove (for the case $d\geq 1$) the exact order estimates of the best $m$-term trigonometric approximations of the classes $B^{r}_{p, \theta}$ in the space
$B_{q,1}$ in two cases: a) $1<p\leq 2 <q<\infty$; b) $2<p<q<\infty$. When considering the case a), the so-called ``small smoothness'' effect appeared, that was noted when investigating this quantity
in the $L_q$-space  on some classes of functions in the papers \cite{Rom_03,Stasyuk_14}.

In our case, it consists in the following. When the smoothness parameter $r$ crosses the limiting case $r=d/p$, a ``jump'' in the estimate of the quantity $e_m(B^{r}_{p, \theta})_{B_{q,1}}$ appears.
In other words, we showed that for $1<p\leq 2 <q<\infty$,  in the cases $d(1/p-1/q)<r<d/p$ and $r>d/p$ the quantities $e_m(B^{r}_{p, \theta})_{B_{q,1}}$ have different orders.

Before we move to formulating and proving the obtained results, let us recall the known statements that will be used.

{\bf Lemma A} \cite{Bel_88}. {\it
Let $2<q<\infty$. Then for any trigonometric polynomial 
$P\left(\Theta_n;\boldsymbol{x}\right)$ and any $m<n$, one can find a trigonometric polynomial $P\left(\Theta_m;\boldsymbol{x}\right)$ such that
$$
\left\|P(\Theta_n)-P(\Theta_m)\right\|_q\leq C_3(q) \sqrt{\frac{n}{m}}\left\|P\left(\Theta_n\right)\right\|_2,
$$
and, moreover, $\Theta_m\subset \Theta_n.$
}

{\bf Theorem A.}  {\it 
Let 
$$
t(\boldsymbol{x}) = \sum_{|k_j|\leq n_j} c_{\boldsymbol{k}} e^{i(\boldsymbol{k},\boldsymbol{x})},
$$
where $c_{\boldsymbol{k}}\in \mathbb{C}$, $k_j\in \mathbb{Z}$, $n_j\in \mathbb{N}$, $j=1,\dots d$. Then for $1\leq p<q\leq \infty$ the following inequality holds
}
\begin{equation}\label{n4}
 \| t\|_q \leq 2^d \prod_{j=1}^d n_j^{\frac{1}{p}- \frac{1}{q}} \| t\|_p . 
\end{equation}
The inequality \eqref{n4} was obtained by S.M. Nikol'skii \cite{Nikol'skii1951} and is referred to as the inequality for different metrics.

\begin{theorem}
\label{Th 1}
Let $d\geq 1$, $1<p\leq 2 <q<\infty$, $1 \leq \theta \leq \infty$. Then  the following relations hold
\begin{equation}\label{n5}
    e_m(B^{r}_{p, \theta})_{B_{q,1}}\asymp \begin{cases}
        m^{-\frac{q}{2}(\frac{r}{d}-\frac{1}{p}+\frac{1}{q})}, &  d(\frac{1}{p}-\frac{1}{q})<r<\frac{d}{p},\\
         m^{-\frac{r}{d}+\frac{1}{p}-\frac{1}{2}}, &  r>\frac{d}{p}.
    \end{cases}
\end{equation}
\end{theorem}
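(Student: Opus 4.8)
The plan is to establish the matching upper and lower bounds separately, using the embeddings $B^r_{p,1}\subset B^r_{p,\theta}\subset H^r_p$ to reduce to the two extreme values of $\theta$: it suffices to prove the upper estimate in \eqref{n5} for the largest class $H^r_p=B^r_{p,\infty}$ and the lower estimate for the smallest class $B^r_{p,1}$. Throughout I work with the dyadic decomposition $f=\sum_s\sigma_s(f)$. Since the operators $\sigma_s$ are uniformly bounded on $L_q$ and have finite frequency overlap, it suffices to approximate each block $\sigma_s(f)$ separately and sum the resulting $L_q$-errors, so the $B_{q,1}$-problem decouples into distributing the budget of $m$ frequencies among the blocks. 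For $f\in H^r_p$ I record the two consequences of Theorem A (Nikol'skii): $\|\sigma_s(f)\|_q\ll 2^{sd(1/p-1/q)}2^{-sr}=:A_s$ and $\|\sigma_s(f)\|_2\ll 2^{sd(1/p-1/2)}2^{-sr}=:B_s$, both geometrically decaying because $r>d(1/p-1/q)$.

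For the upper bound I build the approximant blockwise: low blocks are kept entirely, a middle range is compressed via Lemma A (Belinskii), and high blocks are discarded. Spending $m_s$ terms on block $s$ produces error $\ll\sqrt{2^{sd}/m_s}\,B_s$, and the optimal allocation makes the per-block cost proportional to $2^{sd/3}B_s^{2/3}\asymp 2^{\frac{2}{3}s(d/p-r)}$. The sign of $d/p-r$ — whether this quantity increases or decreases in $s$ — is exactly what produces the two regimes. When $r>d/p$ it decreases: I keep $s\le n$ with $2^{nd}\asymp m$ and take a geometrically decreasing allocation $m_s\asymp 2^{nd}2^{-\gamma(s-n)}$ with $0<\gamma<2(r-d/p)$ (all terms summing to $\asymp m$); the total error is dominated by the first compressed block and equals $\asymp B_n\asymp m^{-r/d+1/p-1/2}$. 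When $d(1/p-1/q)<r<d/p$ it increases, so the budget is pushed to high frequencies: I compress blocks up to the critical level $n_1$ with $2^{n_1 d}\asymp m^{q/2}$ (geometrically increasing allocation summing to $\asymp m$) and discard above; now the summed error, together with the discarded tail $\sum_{s>n_1}A_s\asymp A_{n_1}$, is dominated by the top block and equals $\asymp A_{n_1}\asymp m^{-\frac q2(r/d-1/p+1/q)}$.

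For the lower bounds, in both regimes I test the class on a single Dirichlet-type block $f=c\,g_s$, where $g_s(\boldsymbol{x})=\sum_{\boldsymbol{k}\in\mu(s)}e^{i(\boldsymbol{k},\boldsymbol{x})}$, with $c\asymp 2^{-sr}2^{-sd(1-1/p)}$ chosen so that $\|f\|_{B^r_{p,1}}\asymp 2^{sr}\|f\|_p\asymp 1$ (using $\|g_s\|_p\asymp 2^{sd(1-1/p)}$ for $1<p<\infty$); write $M=|\mu(s)|\asymp 2^{sd}$. The aligned (not random) coefficients are essential: they minimize $\|f\|_p$ and hence maximize the admissible amplitude $c$. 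For large smoothness I take $2^{sd}\asymp m$ and use only the $L_2$-bound
\[
e_m(f)_{B_{q,1}}\gg e_m(f)_{L_2}\geq c\sqrt{M-m}\asymp m^{-r/d+1/p-1/2},
\]
the middle inequality being the trivial fact that any $m$-term polynomial misses at least $M-m$ of the equal coefficients of $f$. For small smoothness the $L_2$-bound is too weak, and I take the critical scale $2^{sd}\asymp m^{q/2}$ (so that $m\asymp M^{2/q}$) and argue in $L_q$ by duality against $g_s$, using $\|g_s\|_{q'}\asymp M^{1/q}$: for any $m$-term $P$ one has $\|f-P\|_q\gg M^{-1/q}\,\big|\,cM-\sum_{\boldsymbol{k}\in\mu(s)}\widehat P(\boldsymbol{k})\,\big|$. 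If $\big|\sum_{\boldsymbol{k}\in\mu(s)}\widehat P(\boldsymbol{k})\big|\le\tfrac12 cM$ this already yields $\|f-P\|_q\gg cM^{1-1/q}$; otherwise Cauchy–Schwarz over the at most $m$ frequencies of $P$ lying in $\mu(s)$ forces $\|P\|_2\gg cM/\sqrt m$, whence $\|f-P\|_q\geq\|f-P\|_2\gg cM/\sqrt m-c\sqrt M\gg cM^{1-1/q}$ at this scale. Either way $e_m(f)_{B_{q,1}}\gg cM^{1-1/q}\asymp m^{-\frac q2(r/d-1/p+1/q)}$.

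I expect the main obstacle to be the small-smoothness lower bound: one must show that the aligned block at scale $2^{sd}\asymp m^{q/2}$ is genuinely incompressible in the $L_q$-metric — a statement that is false in $L_2$ — which requires the peak/duality dichotomy above together with the exact critical relation $m\asymp M^{2/q}$; this is precisely the point where the ``jump'' at $r=d/p$ originates. The remaining delicate issue is the bookkeeping in the upper bound, namely checking that the two geometric allocations sum to $\asymp m$ while the block errors collapse to a single dominant term in each regime, but this becomes routine once the monotonicity of $2^{\frac{2}{3}s(d/p-r)}$ identifies which block dominates.
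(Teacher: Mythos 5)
Your proposal is correct, and it splits naturally against the paper. For the upper bound you follow essentially the paper's route: the same reduction via the $\theta$-embeddings (upper estimate for $H^r_p$, lower for $B^r_{p,1}$), the same keep/compress/discard scheme built on Belinskii's Lemma~A with the blockwise decoupling of the $B_{q,1}$-norm, and the same two regimes governed by the sign of $d/p-r$. Your Lagrange-optimal allocation $m_s\propto 2^{\frac{2}{3}s(\frac dp-r)}$ differs in exponent from the paper's choices ($m_s\propto 2^{s(\frac dp-r)}$ in \eqref{n11} for small smoothness, $m_s\propto 2^{-s(r-\frac dp)}$ in case b)), but both are normalized to total $\asymp m$ and both collapse the error sum onto the same dominant block (top block at $2^{sd}\asymp m^{q/2}$, respectively bottom block at $2^{sd}\asymp m$), so the orders agree; the one piece of bookkeeping you flag but do not carry out is the large-smoothness tail, where your allocation dies out at $s^*\asymp l(1+d/\gamma)$ and one must check $\sum_{s>s^*}2^{-s(r-d(\frac1p-\frac1q))}\ll 2^{-ld(\frac rd-\frac1p+\frac12)}$ --- this does hold for every $0<\gamma<2(r-\frac dp)$, since $(1+\frac d\gamma)(r-\frac dp+\frac dq)>r-\frac dp+\frac d2$ in that range, whereas the paper fixes the explicit cutoff $\gamma=(\frac rd-\frac1p+\frac12)/(\frac rd-\frac1p+\frac1q)$ and verifies the tail directly in \eqref{n15}. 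The lower bound is where you genuinely depart: the paper constructs nothing and simply quotes the known $L_q$ estimates (Theorem~B, i.e.\ Stasyuk's result for $d(\frac1p-\frac1q)<r<\frac dp$ and DeVore--Temlyakov for $r>\frac dp$), transferring them through $\|\cdot\|_{B_{q,1}}\gg\|\cdot\|_q$ from \eqref{n2}; you instead reprove these $L_q$ bounds from scratch with the aligned shell $cg_s$, $c\asymp 2^{-sr}2^{-sd(1-\frac1p)}$ (legitimate since $1<p\le2$ makes $\|g_s\|_p\asymp 2^{sd(1-\frac1p)}$), via Parseval counting at $2^{sd}\asymp m$ for large smoothness and via the duality/dichotomy argument at the critical scale $2^{sd}\asymp m^{q/2}$ for small smoothness --- I checked both branches of the dichotomy: pairing against $g_s/\|g_s\|_{q'}$ with $\|g_s\|_{q'}\asymp M^{1/q}$ gives $cM^{1-1/q}$ when the surviving mass is at most $\frac12 cM$, and otherwise Cauchy--Schwarz forces $\|P\|_2\gg cM/\sqrt m\asymp cM^{1-1/q}\gg c\sqrt M$ (using $1-\frac1q>\frac12$), so $\|f-P\|_q\ge\|f-P\|_2$ closes the case, and the resulting exponents match \eqref{n5} exactly. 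What each route buys: the paper's citation is short but leans on two nontrivial published theorems; your construction makes the result self-contained and makes visible where the jump at $r=\frac dp$ originates (the $L_q$-incompressibility of an aligned block at the supercritical scale $M\asymp m^{q/2}$), while confirming that, just as in the paper, the lower bound uses only the weaker $L_q$-norm and exploits nothing specific to $B_{q,1}$.
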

Note that in the case $1<p\leq 2 < q < p/(p-1)$, $r>d$, the order of the quantity $e_m(B^{r}_{p, \theta})_{B_{q,1}}$ was obtained in \cite{Romanyuk_Romanyuk_Pozharska_Hembarska2023}.
\begin{proof}[Proof of Theorem~\ref{Th 1}]
Let us first prove the upper estimates in \eqref{n5}. In view of the fact, that the respective estimates do not depend on the parameter $\theta$, it is sufficient to consider the case $\theta=\infty$, i.e., to prove the upper estimate of the quantity $e_m(H^{r}_{p})_{B_{q,1}}$.

Let $f\in H^{r}_{p}$ and a number $l\in \mathbb{N}$ is such that satisfies the condition $2^{dl} \leq m <2^{d(l+1)}$, i.e., $m\asymp 2^{dl}$. We will approximate the function $f$ by the polynomials of the form
\begin{equation}\label{n6}
P(\Theta_m, \boldsymbol{x}) = \sum_{s=0}^{l-1} f_{(s)}(\boldsymbol{x}) + \sum_{s=l}^{[\gamma l]} P(\Theta_{m_s}, \boldsymbol{x}),
  \end{equation}
  where the polynomials $P(\Theta_{m_s}, \boldsymbol{x})$ approximate the ``blocks'' $f_{(s)}(\boldsymbol{x})$ according to Lemma A, and the numbers $m_s$, $\gamma$ are chosen, depending on the considered case, so that the number of harmonics of the polynomial $P(\Theta_m, \boldsymbol{x})$ is of order $m$. (Here and in what follows,  $[a]$ denotes the integer part of the number $a$.)

  Hence, noting that
  $$
  f(\boldsymbol{x}) = \sum_{s\in \mathbb{Z}_+} f_{(s)}(\boldsymbol{x}),
  $$
and for the function $f\in H^{r}_{p}$ it holds
\begin{equation}\label{n7}
 \|f_{(s)}\|_p \ll 2^{-sr},
  \end{equation}
  by \eqref{n2} we can write
  \begin{align}
      \|f- P(\Theta_m) \|_{B_{q,1}} &\ll     \Bigg\|  \sum_{s=l}^{[\gamma l]}( f_{(s)} - P(\Theta_{m_s}))  \Bigg\|_{B_{q,1}} +    \Bigg\|  \sum_{s=[\gamma l]+1}^\infty  f_{(s)} \Bigg \|_{B_{q,1}}
   \nonumber    \\
      & := I_1+I_2 .
      \label{n8}  
  \end{align}

  Let us first estimate the quantity $I_2$.

  Taking into account the definition of the norm $ \|\cdot \|_{B_{q,1}}$, by the inequality of different metrics \eqref{n4} and the estimate \eqref{n7}, we get
  \begin{align}
    I_2 & =       \Bigg\|  \sum_{s=[\gamma l]+1}^\infty  f_{(s)}  \Bigg\|_{B_{q,1}} \asymp \sum_{s=[\gamma l]+1}^\infty  \| f_{(s)}  \|_{q}
    \nonumber
      \\
      &  \ll \sum_{s=[\gamma l]+1}^\infty 2^{ds(\frac{1}{p}-\frac{1}{q})}  \| f_{(s)}  \|_{p}\ll 
      \sum_{s=[\gamma l]+1}^\infty
      2^{-s(r-d(\frac{1}{p}-\frac{1}{q}))}
      \nonumber
      \\
      &  \asymp
      2^{-\gamma l d(\frac{r}{d}-\frac{1}{p}+\frac{1}{q})}.
      \label{n9}  
  \end{align}
  Moving to estimation of the term $I_1$, we use Lemma A. Hence, we get
  \begin{align}
      I_1 &=  \Bigg\|  \sum_{s=l}^{[\gamma l]}( f_{(s)} - P(\Theta_{m_s}))  \Bigg\|_{B_{q,1}} 
      \leq \sum_{s=l}^{[\gamma l]} \| f_{(s)} - P(\Theta_{m_{s}})  \|_{q} 
    \nonumber
      \\
      &         \ll \sum_{s=l}^{[\gamma l]} \left(\frac{2^{sd}}{m_s}\right)^{1/2} \| f_{(s)} \|_{2} 
      \ll \sum_{s=l}^{[\gamma l]} \frac{2^{\frac{sd}{2}} 
      2^{sd(\frac{1}{p} -\frac{1}{2}) }  \| f_{(s)} \|_{p} }{m_s^{1/2}}
 \nonumber
      \\ &
    \ll 
      \sum_{s=l}^{[\gamma l]}  \frac{2^{-sd(\frac{r}{d}- \frac{1}{p})}}{m_s^{1/2}} .
     \label{n10}   
  \end{align}

  To further estimate the quantity 
  $I_1$,
  let us separately consider two cases.

  a) Let $d(1/p-1/q)<r<d/p$. For $s\in \mathbb{Z}_+$ let us
 set
\begin{align}
     m_s &= [ 2^{dl} 2^{s(\frac{d}{p}-r)} 2^{-\frac{ql}{2}(\frac{d}{p}-r) } ] +1,
   \label{n11}  \\
     \gamma &= \frac{q}{2} .
     \nonumber
\end{align}

  It is easy to verify that for such choice of the numbers $m_s$ and $\gamma$, the order of harmonics of the polynomial \eqref{n6} does not exceed $m$.
  We have
   \begin{align*}
     \sum_{s=0}^{l-1}    | \mu(s) | +   \sum_{s=l}^{[\frac{q}{2}l]}   m_s
 &    \ll 2^{dl} + \left(\frac{q}{2}-1 \right)l + 2^{dl} 2^{-\frac{ql}{2}(\frac{d}{p}-r) }
      \sum_{s=l}^{[\frac{q}{2}l]} 2^{s(\frac{d}{p}-r)}
      \\
      & \ll 2^{dl} +  \left(\frac{q}{2}-1 \right)l + 2^{dl} 2^{-\frac{ql}{2}(\frac{d}{p}-r) }
      2^{\frac{ql}{2}(\frac{d}{p}-r)} \ll 2^{dl} \asymp m.
   \end{align*}
 (Here and in what follows, by $|A|$ we denote the number of elements of the finite set $A$.)

 Therefore, substituting the values of $m_s$ from \eqref{n11} into
    \eqref{n10} and making elementary transformations, we obtain the estimate of the quantity $I_1$:
  \begin{align}
      I_1 &
           \ll \sum_{s=l}^{[\frac{q}{2}l]} 2^{-ds(\frac{r}{d}-\frac{1}{p})} 2^{-\frac{dl}{2}} 2^{-\frac{s}{2}(\frac{d}{p}-r)} 
      2^{\frac{ql}{4}(\frac{d}{p}-r)}
       \nonumber
      \\
      & = 2^{-\frac{dl}{2}}   2^{\frac{ql}{4}(\frac{d}{p}-r)}
       \sum_{s=l}^{[\frac{q}{2}l]} 2^{-\frac{ds}{2}(\frac{r}{d}-\frac{1}{p})}
       \ll 2^{-\frac{dl}{2}}   2^{\frac{ql}{4}(\frac{d}{p}-r)}
       2^{-\frac{dql}{4}(\frac{r}{d}-\frac{1}{p})}
 \nonumber       \\ 
 & = 2^{-\frac{dl}{2}-\frac{qrl}{2} + \frac{dql}{2p}}
       = 2^{-\frac{dql}{2}( \frac{r}{d}-\frac{1}{p}+ \frac{1}{q} )}
       \asymp m^{-\frac{q}{2}( \frac{r}{d}-\frac{1}{p}+ \frac{1}{q} )}.
\label{n12}
  \end{align}  

  Returning to estimation of the quantity $I_2$ (see \eqref{n9}), we obtain for $\gamma=q/2$ that
  \begin{equation}\label{n13}
I_2 \ll 2^{-\frac{dql}{2}( \frac{r}{d}-\frac{1}{p}+ \frac{1}{q} )}
       \asymp m^{-\frac{q}{2}( \frac{r}{d}-\frac{1}{p}+ \frac{1}{q} )}.
  \end{equation} 

  Combining the estimates \eqref{n12} and \eqref{n13} with \eqref{n8}, we get for $f\in H^r_p$:
  $$
   e_m(f)_{B_{q,1}}\ll  m^{-\frac{q}{2}( \frac{r}{d}-\frac{1}{p}+ \frac{1}{q} )},
  $$
that concludes the proof of the upper estimate for the quantity 
   $e_m(B^{r}_{p, \theta})_{B_{q,1}}$, $1\leq \theta\leq \infty$, in the case 
$d(1/p-1/q)<r<d/p$.

b) Let $r>d/p$. Here we set
 \begin{align*}
  m_s& = [ 2^{dl(\frac{r}{d}- \frac{1}{p} +1 )   } 2^{-s(r-\frac{d}{p})} ] +1,
\\
\gamma &= \frac{\frac{r}{d}- \frac{1}{p} +\frac{1}{2}}{ \frac{r}{d}-\frac{1}{p}+\frac{1}{q}}   \, .
 \end{align*}
In this case, the number of harmonics in the polynomial \eqref{n6} is estimated as follows.
  \begin{align*}
     \sum_{s=0}^{l-1}    | \mu(s) | +   \sum_{s=l}^{[\frac{q}{2}l]}   m_s
 &    \ll 2^{dl} + (\gamma-1)l + 2^{dl(\frac{r}{d}- \frac{1}{p} +1) }
      \sum_{s=l}^{[\gamma l]} 2^{-s(r-\frac{d}{p})}
      \\
      & \ll 2^{dl} + (\gamma-1)l + 2^{dl(\frac{r}{d}- \frac{1}{p} +1) }
      2^{-l(r-\frac{d}{p})}
   \\
   &= 2^{dl} + 2^{dl} +(\gamma-1)l 
       \ll 2^{dl} \asymp m.
   \end{align*}
Further substituting the values of $m_s$ into
    \eqref{n10} and making  respective elementary transformations, we obtain
 \begin{align}
      I_1 &
      \ll \sum_{s=l}^{[\gamma l]} 2^{-ds(\frac{r}{d}-\frac{1}{p})} 2^{-\frac{dl}{2} (\frac{r}{d}- \frac{1}{p} +1)} 
      2^{\frac{ds}{2}(\frac{r}{d}-\frac{1}{p})} 
          \nonumber
      \\
      & = 2^{-\frac{dl}{2}  (\frac{r}{d}- \frac{1}{p} +1) } 
            \sum_{s=l}^{[\gamma l]}
            2^{-\frac{ds}{2}(\frac{r}{d}-\frac{1}{p})}
       \ll 2^{-\frac{dl}{2}  (\frac{r}{d}- \frac{1}{p} +1)  }  
      2^{-\frac{dl}{2}(\frac{r}{d}-\frac{1}{p})}
 \nonumber       \\ 
 & =  2^{-dl (\frac{r}{d}- \frac{1}{p} +\frac{1}{2})  }  
       \asymp m^{- \frac{r}{d}+\frac{1}{p}- \frac{1}{2}}.
\label{n14}
  \end{align}  
  
    What concerns the term $I_2$ from \eqref{n8}, here for the chosen value of $\gamma$ we get
  \begin{equation}\label{n15}
    I_2 \ll 2^{-dl (\frac{r}{d}- \frac{1}{p} +\frac{1}{2})  }  
       \asymp m^{- \frac{r}{d}+\frac{1}{p}- \frac{1}{2}}.
  \end{equation}

    Therefore, taking into account the estimates \eqref{n14}, \eqref{n15} and using \eqref{n8}, we obtain
    $$
    e_m(B^{r}_{p, \theta})_{B_{q,1}} \ll e_m(H^{r}_{p})_{B_{q,1}} \ll 
    m^{- \frac{r}{d}+\frac{1}{p}- \frac{1}{2}}.
    $$

    The upper estimates in \eqref{n5} are proved.

   Let us get the respective lower estimates. Here we note that they follow from the known estimates of the quantities 
   $e_m(B^{r}_{p, \theta})_{q} $.

 {\bf Theorem B.} {\it
 Let $1<p\leq 2 <q<\infty$, $1 \leq \theta \leq \infty$. Then for $d\geq 1$ the following relations hold
\begin{equation}\label{n16}
    e_m(B^{r}_{p, \theta})_{q}\asymp \begin{cases}
        m^{-\frac{q}{2}(\frac{r}{d}-\frac{1}{p}+\frac{1}{q})}, &  d(\frac{1}{p}-\frac{1}{q})<r<\frac{d}{p},\\
         m^{-\frac{r}{d}+\frac{1}{p}-\frac{1}{2}}, &  r>\frac{d}{p}.
    \end{cases}
\end{equation}
}
Note that in the case $d(1/p-1/q)<r<d/p$ this estimate was obtained in \cite{Stasyuk_14}, and for $r>d/p$ in \cite{DeVore_Teml_95}.
     
Hence, taking into account that $\| \cdot\|_{B_{q,1}} \gg  \| \cdot\|_q$
and using \eqref{n16}, we derive to the required lower estimates of the quantity 
 $e_m(B^{r}_{p, \theta})_{B_{q,1}}$.

 Theorem \ref{Th 1} is proved.
\end{proof}

In the following statement, we consider the case $2< p<q<\infty$.

\begin{theorem}\label{Thm2}
    Let $d\geq 1$, $2< p<q<\infty$, $1 \leq \theta \leq \infty$, $r> d/2$. Then  the following estimate holds
\begin{equation}\label{n17}
    e_m(B^{r}_{p, \theta})_{B_{q,1}}\asymp 
        m^{-\frac{r}{d}}.
  \end{equation}
\end{theorem}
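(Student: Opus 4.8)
The plan is to follow the scheme of Theorem~\ref{Th 1}. For the upper bound, since $B^{r}_{p,\theta}\subset H^{r}_{p}$ for every $\theta$ and the target order is independent of $\theta$, it suffices to bound $e_m(H^{r}_{p})_{B_{q,1}}$. I would fix $f\in H^{r}_{p}$ and $l\in\mathbb{N}$ with $m\asymp 2^{dl}$, and approximate $f$ by a polynomial of the form \eqref{n6}: keep the blocks $f_{(s)}$ intact for $s<l$, and replace $f_{(s)}$ by an $m_s$-term polynomial $P(\Theta_{m_s})$ furnished by Lemma~A for $l\le s\le[\gamma l]$. By \eqref{n2} and the triangle inequality, the error splits as in \eqref{n8} into $I_1+I_2$, where $I_1$ collects the approximation defects of the blocks $l\le s\le[\gamma l]$ and $I_2$ is the discarded tail $s>[\gamma l]$. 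The lower bound will then be read off, exactly as in Theorem~\ref{Th 1}, from $\|\cdot\|_{B_{q,1}}\gg\|\cdot\|_q$ together with the known order $e_m(B^{r}_{p,\theta})_{q}\asymp m^{-r/d}$ for $2<p<q<\infty$, $r>d/2$.

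For $I_2$ I would argue as in \eqref{n9}: since a single dyadic block satisfies $\|f_{(s)}\|_{B_{q,1}}\asymp\|f_{(s)}\|_q$, the inequality of different metrics \eqref{n4} together with \eqref{n7} gives $I_2\ll\sum_{s>[\gamma l]}2^{sd(1/p-1/q)}2^{-sr}\asymp 2^{-\gamma l(r-d(1/p-1/q))}$. Because $1/p-1/q<1/2<r/d$, the exponent $r-d(1/p-1/q)$ is positive, so choosing $\gamma=\frac{r}{r-d(1/p-1/q)}$ forces $I_2\ll 2^{-rl}\asymp m^{-r/d}$.

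The term $I_1$ is where the hypothesis $r>d/2$ enters. Since $p>2$, on the normalized torus $\|f_{(s)}\|_2\le\|f_{(s)}\|_p$, so \emph{no} Nikol'skii factor $2^{sd(1/p-1/2)}$ appears; this is the essential difference from the cases $p\le 2$ in Theorem~\ref{Th 1}. Hence, by Lemma~A and \eqref{n7}, $I_1\le\sum_{s=l}^{[\gamma l]}\|f_{(s)}-P(\Theta_{m_s})\|_q\ll\sum_{s=l}^{[\gamma l]}(2^{sd}/m_s)^{1/2}2^{-sr}$. I would take $m_s=[2^{l(r+d/2)}2^{-s(r-d/2)}]+1$, which decreases geometrically in $s$; then $\sum_{s\ge l}m_s\asymp 2^{l(r+d/2)}2^{-l(r-d/2)}=2^{dl}$, so the total number of harmonics in \eqref{n6} is $\asymp 2^{dl}\asymp m$. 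Substituting this $m_s$ produces a geometric sum whose ratio is $2^{(d/2-r)/2}<1$ precisely when $r>d/2$; the sum is therefore dominated by its $s=l$ term, and a short computation yields $I_1\asymp 2^{-rl}\asymp m^{-r/d}$. Combined with the estimate of $I_2$, this gives $e_m(H^{r}_{p})_{B_{q,1}}\ll m^{-r/d}$.

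I expect the main technical obstacle to be the borderline block $s=l$: with the above choice $m_l\asymp 2^{dl}\asymp|\mu(l)|$, so one must check that $m_l$ can be kept strictly below $|\mu(l)|$ for Lemma~A to apply (for instance by inserting a small constant factor into $m_s$, or equivalently by retaining the $l$-th block intact and starting the approximation at $s=l+1$); in either case the order of $I_1$ is unaffected. The only other point requiring care is confirming $\|\cdot\|_{B_{q,1}}\asymp\|\cdot\|_q$ on each single dyadic block, which is what legitimizes passing from the $B_{q,1}$-norm to $\|\cdot\|_q$ in both $I_1$ and $I_2$. With these in place, the lower bound $e_m(B^{r}_{p,\theta})_{B_{q,1}}\ge e_m(B^{r}_{p,\theta})_{q}\gg m^{-r/d}$ closes the argument and establishes \eqref{n17}.
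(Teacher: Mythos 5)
Your proof is correct, but it is worth comparing it with the paper's, which disposes of the whole theorem in two lines. For the upper bound the paper does not rerun the construction at all: since $p\geq 2$ implies $\|\sigma_s(f)\|_2\leq\|\sigma_s(f)\|_p$, one has the embedding $B^{r}_{p,\theta}\subset B^{r}_{2,\theta}$, and Theorem \ref{Th 1} applied with $p=2$ --- where the hypothesis $r>d/2$ is exactly the condition $r>d/p$ of the second case of \eqref{n5} --- immediately gives $e_m(B^{r}_{p,\theta})_{B_{q,1}}\ll e_m(B^{r}_{2,\theta})_{B_{q,1}}\ll m^{-\frac{r}{d}+\frac12-\frac12}=m^{-r/d}$. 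Your direct construction is precisely the case b) argument from the proof of Theorem \ref{Th 1} specialized to $p=2$ and inlined: your choice $m_s=[2^{l(r+d/2)}2^{-s(r-d/2)}]+1$ is the paper's $m_s=[2^{dl(\frac{r}{d}-\frac1p+1)}2^{-s(r-\frac{d}{p})}]+1$ evaluated at $p=2$, and your observation that no Nikol'skii factor $2^{sd(\frac1p-\frac12)}$ is needed when $p>2$ is the same monotonicity $\|\cdot\|_2\leq\|\cdot\|_p$ that powers the embedding. So the two routes are identical in mechanism; yours buys self-containedness (and, incidentally, a smaller admissible $\gamma$, since you estimate the tail $I_2$ with the true $p$ rather than with $p=2$, which is equally legitimate because $r>d/2>d(\frac1p-\frac1q)$), while the paper's buys brevity. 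The lower bound is the same in both: Theorem C of DeVore--Temlyakov, i.e.\ \eqref{n18}, combined with $\|\cdot\|_{B_{q,1}}\gg\|\cdot\|_q$ from \eqref{n2}. As for the two technical points you flag: the borderline block $s=l$ is harmless, since whenever $m_s\geq|\mu(s)|$ you may simply take $P(\Theta_{m_s})=f_{(s)}$ with zero defect (Lemma A is only invoked when $m_s<|\mu(s)|$, and the paper tacitly does the same); and the block-wise equivalence $\|t\|_{B_{q,1}}\asymp\|t\|_q$ for $t$ with spectrum in a single $\mu(s)$ is immediate from the $f_{(s)}$-based form of the norm valid for $1<q<\infty$, noting that Lemma A guarantees $\Theta_{m_s}\subset\mu(s)$, so each defect $f_{(s)}-P(\Theta_{m_s})$ stays spectrally inside its own block.
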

\begin{proof}[Proof of Theorem~\ref{Thm2}]
    The upper estimate follows from Theorem \ref{Th 1} as $p=2$, in view of the embedding
    $B^{r}_{p, \theta}\subset B^{r}_{2, \theta}$, $p\geq 2$. 

    The respective lower estimate in \eqref{n17} is a corollary from the known statement for the $L_q$-space.
    
 {\bf Theorem C \cite{DeVore_Teml_95}.} {\it
 Let $d\geq 1$, $2< p<q<\infty$, $1 \leq \theta \leq \infty$, $r>d/2$. Then  the following estimate holds
\begin{equation}\label{n18}
    e_m(B^{r}_{p, \theta})_{q}\asymp 
        m^{-\frac{r}{d}}.
  \end{equation}
 }

   Hence, by the relations \eqref{n18} and \eqref{n2}, we obtain the lower estimate for the quantity $e_m(B^{r}_{p, \theta})_{B_{q,1}}$.

   Theorem \ref{Thm2} is proved.
\end{proof}

Let us comment the results of Theorems \ref{Th 1}, \ref{Thm2}.

First, let us recall that in the paper \cite{Romanyuk_Romanyuk_Pozharska_Hembarska2023}, the authors obtain the estimates for the quantities $e_m(B^{r}_{p, \theta})_{B_{q,1}}$ for other relations between the parameters
$p$ and $q$, namely they proved the following statement.

{\bf Corollary A.} {\it
 Let $d\geq 1$ and either $1\leq p\leq q\leq 2$ or $1\leq q\leq p\leq \infty$. 
 Then for $r>d(1/p-1/q)_+$, $1 \leq \theta \leq \infty$    the estimate 
\begin{equation}\label{n19}
    e_m(B^{r}_{p, \theta})_{B_{q,1}}\asymp 
        m^{-\frac{r}{d}+ (\frac{1}{p}-\frac{1}{q})_+}
  \end{equation}
   holds, where $a_+=\max\{a, 0\}$.
 }
 
It is worth mentioning, that the estimates \eqref{n19} are realized by approximation of functions $f\in B^{r}_{p, \theta}$ by the trigonometric polynomials with the spectrum in cubic regions of the form
$$
t(\boldsymbol{x}) = \sum_{ \boldsymbol{k} \in C^d(2^n)} c_{\boldsymbol{k}} e^{i(\boldsymbol{k},\boldsymbol{x})},  \ c_{\boldsymbol{k}} \in \mathbb{C},
$$
where $C^d(2^n):=\{\boldsymbol{k} \in  \mathbb{Z}^d\colon  |k_j|< 2^n,  j=1, \dots d\}$
under the condition $m\asymp 2^{dn}$.

What concerns the relations between the parameters $p$ and $q$ that were considered in Theorems \ref{Th 1}, \ref{Thm2}, in these cases the mentioned polynomials do not realize the orders of the respective quantities 
$e_m(B^{r}_{p, \theta})_{B_{q,1}}$.

It is also worth to compare the obtained results with the  estimates for  the best orthogonal trigonometric approximations $e^\bot_m(B^{r}_{p, \theta})_{B_{q,1}}$.

Let us further formulate the respective statement.

{\bf Theorem D } \cite{Romanyuk_Romanyuk_Pozharska_Hembarska2023}. {\it
 Let  $d\geq 1$, $1\leq p, q,\theta \leq \infty$, $(p,q)\notin \{ (1,1), (\infty, \infty) \}$.   Then for $r>d(1/p-1/q)_+$  it holds
\begin{equation}\label{n20}
    e_m^\bot(B^{r}_{p, \theta})_{B_{q,1}}\asymp 
        m^{-\frac{r}{d}+ (\frac{1}{p}-\frac{1}{q})_+}.
  \end{equation}
   }

We note that the relations \eqref{n20} are obtained by the approximation of functions $f\in B^{r}_{p, \theta}$ by their cubic Fourier sums $S_n(f)$ of the form
$$
S_n(f):= S_n(f,\boldsymbol{x}) = \sum_{s=0}^{n-1} f_{(s)} (\boldsymbol{x})
$$
under the condition $m\asymp 2^{dn}$.

Therefore, comparing the results of Theorems \ref{Th 1}, \ref{Thm2} with the estimate \eqref{n20} for the respective values of the parameters $r, p, q$, we see that the quantities $e_m(B^{r}_{p, \theta})_{B_{q,1}}$ and $e_m^\bot(B^{r}_{p, \theta})_{B_{q,1}}$ differ in order.

From the other hand side, comparing the estimates from Theorem D and Corollary A, we see that in both of the cases  $1\leq p\leq q\leq 2$ and $1\leq q\leq p\leq \infty$, $(p,q)\notin \{ (1,1), (\infty, \infty) \}$, it holds
$$
  e_m^\bot(B^{r}_{p, \theta})_{B_{q,1}}\asymp   e_m(B^{r}_{p, \theta})_{B_{q,1}}.
$$

\begin{remark}
    The question on the order of the quantity  $e_m^\bot(B^{r}_{p, \theta})_{B_{q,1}}$ for the critical smoothness parameter $r=d/p$, that was not considered in Theorem \ref{Th 1}, remains open.
  \end{remark}

\section{Estimate of the quantity 
$e_m(B^{r}_{p, \theta})_{B_{\infty,1}}$  ($d=1$)}

As a continuation of investigations made in the previous section, in what follows we consider the univariate case and obtain  first an exact order estimate for the quantity  $e_m(B^{r}_{p, \theta})_{B_{\infty,1}}$. Afterwards, using this result and the known estimate of the quantity 
$e_m(W^{r}_{p, \alpha})_{\infty}$, we get the order of the best $m$-term trigonometric approximation of the Sobolev classes $W^{r}_{p, \alpha}$ in the space $B_{\infty,1}$.

Let us formulate the statement that will be used in further argumentation. 

Let $T_n(x)$, $n\in \mathbb{N}$, $x\in \mathbb{R}$, is a polynomial of the form
$$
T_n:= T_n(x)= \sum_{k=-n}^n  c_{k} e^{ikx},  \ c_{k} \in \mathbb{C} .
$$

{\bf Lemma B} \cite{Bel_98}. {\it 
For every $2\leq p< \infty$ and $1\leq m\leq 2n$ there exists a trigonometric polynomial $T(\Theta_m)$ with at most $m$ harmonics, such that
it holds
\begin{equation}\label{n21}
    \left\|T_n-T(\Theta_m)\right\|_\infty\leq C_4 \left( \frac{n}{m}
        \log\left(\frac{n}{m}+1\right) \right)^{1/p}
    \left\|T_n\right\|_p,
\end{equation}
and the spectrum $\Theta_m$ is in the segment $[-2n, 2n]$.
}

\begin{theorem}\label{Thm3}
    Let $d=1$, $1\leq p, \theta\leq  \infty$, $r>\max\{1/p, 1/2\}$. Then the following estimate holds
    \begin{equation}\label{n22}
      e_m(B^{r}_{p, \theta})_{B_{\infty, 1}}\asymp 
        m^{-r+ (\frac{1}{p}-\frac{1}{2})_+}.
\end{equation}
\end{theorem}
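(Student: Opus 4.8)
The plan is to follow the scheme of Theorem~\ref{Th 1}: establish the upper bound directly and deduce the matching lower bound from a known $L_\infty$-estimate. Since the target order is independent of $\theta$ and $B^r_{p,\theta}\subset H^r_p$, it suffices to bound $e_m(H^r_p)_{B_{\infty,1}}$. So I would fix $f\in H^r_p$ and $l\in\mathbb N$ with $m\asymp 2^l$ (recall $d=1$), and approximate $f$ by a polynomial analogous to \eqref{n6} but built from the blocks $\sigma_s(f)$: keep $\sigma_s(f)$ unchanged for $0\le s\le l-1$, replace $\sigma_s(f)$ by an approximating polynomial $P_s$ with $m_s$ harmonics for $l\le s\le[\gamma l]$, and discard the tail $s>[\gamma l]$. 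As in \eqref{n8} this yields $\|f-P(\Theta_m)\|_{B_{\infty,1}}\ll I_1+I_2$, with $I_1$ the approximated range and $I_2$ the discarded tail; the numbers $m_s,\gamma$ are to be chosen so that $\sum_{s<l}|\mu(s)|+\sum_{s=l}^{[\gamma l]}m_s\asymp 2^l\asymp m$.

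The tail is handled exactly as in \eqref{n9}: by \eqref{n1} one has $\|\sigma_s(f)\|_p\ll 2^{-sr}$, so the inequality for different metrics \eqref{n4} (with $q=\infty$, $d=1$) gives $\|\sigma_s(f)\|_\infty\ll 2^{s/p}2^{-sr}=2^{-s(r-1/p)}$ and hence $I_2\ll 2^{-\gamma l(r-1/p)}$, which needs $r>1/p$. For $I_1$ I invoke Lemma~B, and here the argument splits. For $2<p<\infty$ I apply \eqref{n21} directly to $\sigma_s(f)$, getting $\|\sigma_s(f)-P_s\|_\infty\ll(2^s m_s^{-1}\log(2^s m_s^{-1}+1))^{1/p}2^{-sr}$. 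For $1\le p<2$ Lemma~B does not apply, so I first raise the metric from $p$ to $2$ via \eqref{n4}, namely $\|\sigma_s(f)\|_2\ll 2^{s(1/p-1/2)}2^{-sr}=2^{-s(r-1/p+1/2)}$, and then apply \eqref{n21} with exponent $1/2$. In both cases I take $m_s=[\,2^{l}2^{-(s-l)\delta}\,]+1$ with a small $\delta>0$, so that $\sum_{s\ge l}m_s\asymp 2^l$ automatically (the $+1$ terms contribute only $\asymp l\ll 2^l$) and $2^s/m_s\asymp 2^{(s-l)(1+\delta)}$; the logarithmic factor is then $\asymp(s-l)$ and is harmless, being summed against a geometric decay. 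The decay exponent for $I_1$ is $(1+\delta)/p-r$ when $p>2$ and $\delta/2-(r-1/p)$ when $p<2$, each of which is negative precisely because $r>1/p$ (for $\delta$ small); thus the series over $s$ converges, $I_1$ is dominated by its $s=l$ term, and $I_1\ll 2^{-lr}=m^{-r}$ for $p>2$, $I_1\ll 2^{-l(r-1/p+1/2)}=m^{-r+1/p-1/2}$ for $p<2$. Choosing $\gamma$ so that $\gamma(r-1/p)\ge r$ (resp.\ $\ge r-1/p+1/2$) makes $I_2$ no larger than $I_1$, finishing the upper bound.

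One point needs extra care relative to Theorem~\ref{Th 1}: the target norm is $\|\cdot\|_{B_{\infty,1}}=\sum_j\|\sigma_j(\cdot)\|_\infty$, and for $q=\infty$ there is no equivalence $\|g\|_{B_{\infty,1}}\asymp\sum_s\|g_{(s)}\|_\infty$, so one cannot pass blindly from the blockwise $L_\infty$-errors to the $B_{\infty,1}$-norm. To handle this I band-limit each $P_s$: since $\sigma_s(f)$ has spectrum in the annulus $2^{s-1}<|k|<2^{s+1}$, I replace $P_s$ by $\tilde P_s$ obtained from $P_s$ through a de la Vall\'ee-Poussin operator that equals the identity on this annulus and vanishes outside $2^{s-2}<|k|<2^{s+2}$. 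Such operators are uniformly bounded on $L_\infty$, so $\|\sigma_s(f)-\tilde P_s\|_\infty\ll\|\sigma_s(f)-P_s\|_\infty$, $\tilde P_s$ has at most $m_s$ harmonics, and now $\sigma_j\tilde P_s\ne 0$ only for $|j-s|\le 3$. Hence each block error meets only boundedly many $\sigma_j$, giving $\|\sum_s(\sigma_s(f)-\tilde P_s)\|_{B_{\infty,1}}\ll\sum_s\|\sigma_s(f)-P_s\|_\infty$ and reducing $I_1$ (and likewise $I_2$) to the blockwise estimates above without loss.

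For the lower bound I rely, as in Theorems~\ref{Th 1} and \ref{Thm2}, on the corresponding quantity in $L_\infty$: under the present hypotheses the known order $e_m(B^r_{p,\theta})_\infty\gg m^{-r+(1/p-1/2)_+}$ holds, and since $\|\cdot\|_{B_{\infty,1}}\gg\|\cdot\|_\infty$ by \eqref{n2}, we get $e_m(B^r_{p,\theta})_{B_{\infty,1}}\ge e_m(B^r_{p,\theta})_\infty\gg m^{-r+(1/p-1/2)_+}$, matching the upper bound. The requirement $r>1/2$ (beyond $r>1/p$, which already ensures finiteness of $\|f\|_{B_{\infty,1}}$) enters exactly here, as the regime where this $L_\infty$ lower bound is available. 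I expect the main obstacle to be the correct treatment of the target norm $B_{\infty,1}$ together with the logarithmic factor of Lemma~B: one must check that band-limiting preserves control on the number of harmonics and that the $\log$-factor is genuinely absorbed by the geometric choice of $m_s$, so that no spurious power of $\log m$ survives in the final order.
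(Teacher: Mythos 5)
Your proposal is correct and follows essentially the paper's own scheme: the same two-scale approximant (exact low blocks for $s<l$, Lemma~B approximants with geometrically decaying budgets $m_s$ on $l\le s\le[\gamma l]$, discarded tail), the same passage to the $B_{\infty,1}$-norm via the uniformly $L_1$-bounded de la Vall\'ee-Poussin multipliers $\|\sigma_s\|_1\le C$, and the identical lower bound obtained from \eqref{n35} together with $\|\cdot\|_{B_{\infty,1}}\gg\|\cdot\|_\infty$ from \eqref{n2}. The only deviations are organizational: the paper runs the construction once, for $H^r_2$, and transfers to other $p$ via the embeddings $B^{r}_{p,\theta}\subset B^{r}_{2,\theta}$ ($p>2$; this also covers $p=\infty$, which your case split $2<p<\infty$ versus $1\le p<2$ leaves formally unaddressed, though it is handled by the same metric-lowering) and $B^{r}_{p,\theta}\subset B^{r-1/p+1/2}_{2,\theta}$ ($p<2$), whereas you run each $p$ directly with a free decay parameter $\delta$ in place of the paper's smoothness-tied choice $m_s\asymp 2^{l(r+1/2)}2^{-s(r-1/2)}$; moreover, your band-limiting of the Lemma~B approximants is a welcome explicit justification of the paper's inequality \eqref{n26}--\eqref{n27}, whose restriction of $\sigma_s$ to the neighbouring error blocks $s'\in\{s,s+1\}$ tacitly uses a spectral localization of the approximants that the stated conclusion of Lemma~B (only $\Theta_{m_s}\subset[-2^{s+1},2^{s+1}]$) does not by itself guarantee.
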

\begin{proof}[Proof of Theorem~\ref{Thm3}]
    Let us get the upper estimate in (\ref{n22}) in the case $p=2$ and $\theta=\infty$, i.e., for the classes $H^r_2$. Let us choose $l\in \mathbb{N}$ such that
    the inequality $2^l\leq m <2^{l+1}$ holds and for $s\in \mathbb{Z}_+$ introduce the numbers
    \begin{align}
        m_s&=\begin{cases}
            2^s, & 0\leq s\leq l-1, \\
            [2^{l(r+\frac{1}{2})}  \cdot 2^{-s(r-\frac{1}{2})} ]+1, & l\leq s \leq [\gamma l], \\
            0, & s\geq [\gamma l]+1,
                  \end{cases}
          \label{n23}  \\
        \gamma & = \frac{r}{r-\frac{1}{2}}.
    \nonumber
    \end{align}
we will approximate a function $f\in H^r_2$ by the polynomial $T(\Theta_m, x)$ of the form
 \begin{equation}\label{n24}
 T(\Theta_m, x)=  \sum_{s=0}^{l-1} f_{(s)}(x) + \sum_{s=l}^{[\gamma l]} T(\Theta_{m_s}, x),
  \end{equation}
  where $T(\Theta_{m_s}, x)$ are the polynomials that approximate the ``blocks'' $f_{(s)}(x)$ of the Fourier series of the function $f(x)$ according to Lemma B.

 It is easy to verify that in this case, the number of harmonics  of the polynomial $ T(\Theta_m, x)$  for $r>1/2$ is of order $m$. Indeed,
  \begin{align*}
      \sum_{s=0}^{l-1} 2^s + \sum_{s=l}^{[\gamma l]} m_s &\ll 2^l + 2^{l(r+\frac{1}{2})}  \sum_{s=l}^{[\gamma l]}   2^{-s(r-\frac{1}{2})} + (\gamma-1)l 
      \\
      & \ll 2^l \cdot 2^{l(r+\frac{1}{2})} \cdot 2^{-l(r-\frac{1}{2})} + (\gamma-1)l \ll 2^l + (\gamma-1)l \ll  2^l  \asymp m.
  \end{align*}

Hence, for $f\in H^r_2$, by the choice of the polynomial (\ref{n24}), we can write
  \begin{align}
        \left\|f-T(\Theta_m)\right\|_{B_{\infty,1}} 
 &   \leq    \left\| \sum_{s=l}^{[\gamma l]} (f_{(s)} - T(\Theta_{m_{s}}) )\right\|_{B_{\infty,1}}  +  \left\| \sum_{s=[\gamma l]+1}^{\infty} f_{(s)}\right\|_{B_{\infty,1}} 
\nonumber \\
& := I_3+I_4.
\label{n25}
    \end{align}

We proceed with estimation of the term $I_3$.

Taking into account the definition of $\|\cdot\|_{B_{\infty,1}} $ and using a property of the convolution, we get
  \begin{align}
 I_3  &  =       \sum_{s=0}^\infty   \Big\|  \sigma_{s} \ast \sum_{s'=l}^{[\gamma l]}  (f_{(s')} - T(\Theta_{m_{s'}}) ) \Big\|_{\infty} 
 \leq  \sum_{s=l-1}^{[\gamma l]}   \Big\|  \sigma_{s} \ast \sum_{s'=s}^{s+1}  (f_{(s')} - T(\Theta_{m_{s'}}) ) \Big\|_{\infty}  
 \nonumber \\
& \leq  \sum_{s=l-1}^{[\gamma l]}   \|  \sigma_{s}\|_{1}  \cdot \Bigg\| \sum_{s'=s}^{s+1}  (f_{(s')} - T(\Theta_{m_{s'}}) ) \Bigg\|_{\infty} ,
    \label{n26}
    \end{align}
where $\sigma_s = V_{2^s}- V_{2^{s-1}}$.

Further, by the fact that $\|V_{2^s}\|_1\leq C_5$, $C_5>0$ (see, e.g., \cite[Ch. 1, $\S$1]{Temlyakov1993}), we have
 \begin{equation}\label{n27}
\|\sigma_s \|_1 = \| V_{2^s}- V_{2^{s-1}} \|_1 \leq 
\| V_{2^s}\|_1 + \|V_{2^{s-1}}\|_1 \leq C_6.
  \end{equation}
   Combining (\ref{n27}) with (\ref{n26}) and taking into account the choice (\ref{n23}) of the numbers $m_s$, we get
    \begin{align}
 I_3  & \ll      \sum_{s=l-1}^{[\gamma l]}   
 \Bigg\| \sum_{s'=s}^{s+1}  (f_{(s')} - T(\Theta_{m_{s'}}) ) \Bigg\|_{\infty} \leq  \sum_{s=l-1}^{[\gamma l]}  \sum_{s'=s}^{s+1} 
 \| f_{(s')} - T(\Theta_{m_{s'}}) \|_{\infty}
 \nonumber \\
& \leq  \sum_{s=l}^{[\gamma l]}  \| f_{(s)} - T(\Theta_{m_{s}}) \|_{\infty}
+  \| f_{([\gamma l]+1)} \|_{\infty} := I_5+ I_6.
    \label{n28}
    \end{align}

    To estimate the term $I_5$, we use the relation (\ref{n21}). In our notation it takes the form
    $$
    \| f_{(s)} - T(\Theta_{m_{s}}) \|_{\infty} \ll 
    \left( \frac{2^s}{m_s}
        \log\left(\frac{2^s}{m_s}+1\right) \right)^{1/2}  \| f_{(s)}\|_2 .
    $$

    Noting that for $f\in H^r_2$ the estimate $\| f_{(s)}\|_2 \ll 2^{-sr}$ holds and substituting the values of $m_s$, we obtain
     \begin{align}
 I_5  & \ll     2^{-\frac{l}{2}(r+\frac{1}{2})}  
  \sum_{s=l}^{[\gamma l]} 
 2^{\frac{s}{2}(r+\frac{1}{2})} \left(  
 \log\left(\frac{2^{s(r+\frac{1}{2})}}{2^{l(r+\frac{1}{2})}}+1\right)  \right)^{1/2} 2^{-sr}
  \nonumber \\
& \leq  2^{-\frac{l}{2}(r+\frac{1}{2})}  
  \sum_{s=l}^{[\gamma l]} 
 2^{-\frac{s}{2}(r-\frac{1}{2})} \left(  
 \log\left(\frac{2^{s(r+\frac{1}{2})}}{2^{l(r+\frac{1}{2})}}+1\right)  \right)^{1/2} 
  \nonumber \\
& \ll  2^{-\frac{l}{2}(r+\frac{1}{2})}   2^{-\frac{l}{2}(r-\frac{1}{2})}
= 2^{-lr} \asymp m^{-r}.
    \label{n29}
    \end{align}   

    Concerning the term $I_6$ we note that its estimate will be obtained during estimation of the quantity $I_4$. Namely, it will be shown that 
  \begin{equation}\label{n30}
I_6 \ll m^{-r}.
  \end{equation}  

  Hence, substituting (\ref{n29}) and (\ref{n30}) into (\ref{n28}) we get
    \begin{equation}\label{n31}
I_3 \ll m^{-r}.
  \end{equation} 

  To finish the upper estimation for the quantity   $e_m(H^{r}_{2})_{B_{\infty, 1}}$, it remains to estimate the term $I_4$.

 In view of the definition of  $ \|\cdot \|_{B_{q,1}}$, the inequality of different metrics \eqref{n4} and the values of $\gamma$, we can write
  \begin{align}
    I_4 & =       \Bigg\|  \sum_{s=[\gamma l]+1}^\infty  f_{(s)}  \Bigg\|_{B_{\infty,1}} \ll 
  \sum_{s=[\gamma l]+1}^\infty  \Bigg\|  \sigma_{s} \ast \sum_{s'=s}^{s+1}  f_{(s')}  \Bigg\|_{\infty} 
  \nonumber
      \\
      & \leq  \sum_{s=[\gamma l]+1}^\infty 
       \|\sigma_s \|_1  \Bigg\| \sum_{s'=s}^{s+1}  f_{(s')}  \Bigg\|_{\infty} \ll 
\sum_{s=[\gamma l]+1}^\infty \| f_{(s)} \|_{\infty} 
    \nonumber
      \\
      & \ll 
      \sum_{s=[\gamma l]+1}^\infty  2^{\frac{s}{2}}\| f_{(s)}  \|_{2} 
      \ll \sum_{s=[\gamma l]+1}^\infty 2^{-s(r-\frac{1}{2})} 
      \nonumber
      \\
      & \ll       2^{-l \gamma (r-\frac{1}{2})} = 2^{-lr} \asymp  m^{-r}.
      \label{n32}  
  \end{align}

  To conclude, we substitute (\ref{n31}) and (\ref{n32}) in (\ref{n25}) and get the upper estimate for the quantity 
  $e_m(B^{r}_{2, \theta})_{B_{\infty, 1}}$:
  \begin{equation}\label{n33}
e_m(B^{r}_{2, \theta})_{B_{\infty, 1}} \ll e_m(H^{r}_{2})_{B_{\infty, 1}} \ll m^{-r}.
  \end{equation} 

  From (\ref{n33}) we can easily get the upper estimate for the quantity 
 $e_m(B^{r}_{p, \theta})_{B_{\infty, 1}}$ for other values of the parameter $p$.

 Let us consider two cases.

 a) Let $2<p \leq \infty$. Then, in view of the embedding $B^{r}_{p, \theta} \subset B^{r}_{2, \theta}$, we can write
   \begin{equation*}
e_m(B^{r}_{p, \theta})_{B_{\infty, 1}} \ll e_m(B^{r}_{2, \theta})_{B_{\infty, 1}} \asymp m^{-r}.
  \end{equation*} 

  b) Let $1\leq p<2$. By the inequality of different metrics, for $f\in B^{r}_{p, \theta}$ we have
  \begin{align*}
 \| f\|_{B^{r}_{p, \theta}}   & \asymp \left(\sum_{s=0}^\infty 2^{sr\theta} \|\sigma_s(f)\|_p^\theta \right)^{1/ \theta} \gg
 \left(\sum_{s=0}^\infty 2^{sr\theta} 2^{s(\frac{1}{2}- \frac{1}{p})\theta}  \|\sigma_s(f)\|_2^\theta \right)^{1/ \theta}
       \\
      & =  \left(\sum_{s=0}^\infty 2^{s(r-\frac{1}{p}+ \frac{1}{2})\theta}  \|\sigma_s(f)\|_2^\theta \right)^{1/ \theta} 
      \asymp \| f\|_{B^{r-1/p+ 1/2}_{2, \theta}} \, .
  \end{align*}
  This yields the following embedding
  $$
  B^{r}_{p, \theta} \subset B^{r-1/p+ 1/2}_{2, \theta}, \ 1\leq p<2,
  $$
  and hence, by the estimate (\ref{n33}) with $r-1/p+ 1/2$ in place of $r$, we get
  $$
  e_m(B^{r}_{p, \theta})_{B_{\infty, 1}}  \ll   e_m(B^{r-1/p+ 1/2}_{2, \theta})_{B_{\infty, 1}} \ll m^{-r+\frac{1}{p}- \frac{1}{2}} .
  $$

  The upper estimate in Theorem \ref{Thm3} is proved.

  The lower estimate in (\ref{n22}) is a corollary from the known relation (see \cite{Rom_07})
     \begin{equation}\label{n35}
e_m(B^{r}_{p, \theta})_{\infty} \asymp m^{-r+(\frac{1}{p}- \frac{1}{2})_+}, \ 1\leq p,\theta \leq \infty, \ r>\max\left\{ 1/p, 1/2 \right\},
  \end{equation} 
  and the inequality $\|\cdot \|_{B_{\infty, 1}} \gg \|\cdot \|_{\infty}$.

  Theorem \ref{Thm3} is proved.
\end{proof}
\begin{remark}
    From the result of Theorem \ref{Thm3} and the estimate (\ref{n35}), the relation
    $$
     e_m(B^{r}_{p, \theta})_{B_{\infty, 1}} \asymp e_m(B^{r}_{p, \theta})_{\infty} \asymp m^{-r+(\frac{1}{p}- \frac{1}{2})_+}.
         $$
    follows.

    From the other hand side, comparing the results of  Theorem \ref{Thm3} and Theorem~D (for $d=1$), we notice that in the considered case the quantities $e_m(B^{r}_{p, \theta})_{B_{\infty, 1}}$ and 
    $e_m^\bot(B^{r}_{p, \theta})_{B_{\infty, 1}}$ differ in order.
\end{remark}

We conclude the paper by getting the analogical to Theorem \ref{Thm3}  statement for the Sobolev classes $W^r_{p, \alpha}$.  Further we define these classes (see, e.g., \cite[Ch. 1, $\S$3]{Temlyakov1993}).

Let 
$$
F_r(x,\alpha) = \sum_{k=1}^\infty k^{-r} \cos(kx-\frac{\pi \alpha}{2}), \ r>0, \  \alpha\in \mathbb{R} ,
$$
is the Bernoulli kernel.

Then by $W^r_{p, \alpha}$ we denote a class of functions $f$, such that can be written in the form
$$
f(x) = \frac{1}{2\pi} \int_{\mathbb{T}} \varphi(y) F_r(x-y,\alpha) {\rm d} y ,
$$
where $\varphi \in L_p$, $\|\varphi \|_p\leq 1$.

\begin{theorem}\label{Thm4}
     Let $d=1$, $1\leq p\leq  \infty$, $r>\max\{1/p, 1/2\}$. Then for $\alpha \in \mathbb{R}$ the following estimate holds
    \begin{equation}\label{n36}
      e_m(W^r_{p, \alpha})_{B_{\infty, 1}}\asymp 
        m^{-r+ (\frac{1}{p}-\frac{1}{2})_+}.
\end{equation}
\begin{proof}[Proof of Theorem~\ref{Thm4}]
    The upper estimate follows from Theorem \ref{Thm3}   as $\theta =\infty$ and the embedding $W^r_{p, \alpha} \subset H^r_p$ (see, e.g., \cite[Ch. 1, $\S$3]{Temlyakov1993}). So,
    $$
       e_m(W^r_{p, \alpha})_{B_{\infty, 1}} \ll    e_m(H^r_p)_{B_{\infty, 1}} \asymp 
        m^{-r+ (\frac{1}{p}-\frac{1}{2})_+}.
    $$

    The respective lower estimate in (\ref{n36}) follows from the relation (see \cite{Bel_98})
     \begin{equation}\label{n37}
      e_m(W^r_{p, \alpha})_{\infty}\asymp 
        m^{- \min\{r, r-\frac{1}{p}+\frac{1}{2}\}}, \ 1\leq p \leq \infty, \ r> 1/p,
\end{equation}
    and the inequality $\|\cdot \|_{B_{\infty, 1}} \gg \|\cdot \|_{\infty}$.

Theorem \ref{Thm4} is proved.
\end{proof}

\begin{remark}
    Comparing the estimates (\ref{n36}) and (\ref{n37}) we conclude that for $r>\max\{1/p, 1/2\}$ it holds
    $$
  e_m(W^r_{p, \alpha})_{B_{\infty, 1}}  \asymp   e_m(W^r_{p, \alpha})_{\infty} .
    $$
    
    From the other hand side, comparing the estimate  (\ref{n36}) with the result of \cite[Theorem 2]{Romanyuk_Romanyuk2020}
    $$
    e_m^\bot(W^r_{p, \alpha})_{B_{\infty, 1}} \asymp  
   m^{-r+\frac{1}{p}}, \ 1<p<\infty, \ r>1/p, \ \alpha\in \mathbb{R},
    $$
    we see that for respective values of the parameters $r$ and $p$ the quantities $  e_m(W^r_{p, \alpha})_{B_{\infty, 1}} $ and $e_m^\bot(W^r_{p, \alpha})_{B_{\infty, 1}}$ differ in order.
\end{remark}

\end{theorem}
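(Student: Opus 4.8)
The plan is to establish the matching upper and lower bounds separately, each by reduction to facts already in hand, since Theorem~\ref{Thm4} is in effect a corollary of Theorem~\ref{Thm3} together with the classical order of $e_m(W^r_{p,\alpha})_\infty$. I would first record the elementary exponent identity that makes the two sides of \eqref{n36} comparable to the uniform-metric result, and then handle the two bounds.

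For the upper bound I would exploit the classical embedding of the Sobolev class into the Nikol'skii class, $W^r_{p,\alpha}\subset H^r_p = B^r_{p,\infty}$ (see \cite[Ch.~1, \S3]{Temlyakov1993}). Since $F\subset G$ forces $e_m(F)_{\mathcal X}\le e_m(G)_{\mathcal X}$ directly from the definition $e_m(F)_{\mathcal X}=\sup_{f\in F}e_m(f)_{\mathcal X}$, this gives
\begin{equation*}
  e_m(W^r_{p,\alpha})_{B_{\infty,1}} \le e_m(H^r_p)_{B_{\infty,1}}.
\end{equation*}
Applying Theorem~\ref{Thm3} with $\theta=\infty$ then yields $e_m(W^r_{p,\alpha})_{B_{\infty,1}}\ll m^{-r+(1/p-1/2)_+}$ immediately, so no new approximating construction is required, and the constraint $r>\max\{1/p,1/2\}$ is inherited precisely from that theorem.

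For the lower bound I would start from the known sharp order of the best $m$-term trigonometric approximation of the Sobolev classes in the uniform metric, $e_m(W^r_{p,\alpha})_\infty\asymp m^{-\min\{r,\,r-1/p+1/2\}}$ for $1\le p\le\infty$, $r>1/p$ (Belinsky \cite{Bel_98}). The one computation to check is the identity $\min\{r,\,r-1/p+1/2\}=r-(1/p-1/2)_+$: for $p\ge 2$ one has $1/p-1/2\le 0$, so $(1/p-1/2)_+=0$ and the minimum is $r$, while for $1\le p<2$ one has $1/p-1/2>0$ and the minimum is $r-1/p+1/2$. I would then invoke the first inequality in \eqref{n2} specialised to $q=\infty$, namely $\|g\|_\infty\ll\|g\|_{B_{\infty,1}}$, apply it to $g=f-P(\Theta_m)$, pass to the infimum over polynomials and the supremum over the class, and obtain $e_m(W^r_{p,\alpha})_{B_{\infty,1}}\gg e_m(W^r_{p,\alpha})_\infty\asymp m^{-r+(1/p-1/2)_+}$.

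Because both halves are reductions to already-established results, I do not expect a genuine obstacle here; the care-points are purely bookkeeping: the monotonicity of $e_m(\cdot)_{\mathcal X}$ under class inclusion, the \emph{direction} of the norm comparison (so that the $B_{\infty,1}$-metric dominates the $L_\infty$-metric and therefore furnishes a lower and not an upper bound), and the exponent identity above. Once the upper and lower exponents are seen to coincide, the order relation \eqref{n36} follows.
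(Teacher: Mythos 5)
Your proposal is correct and follows essentially the same route as the paper's own proof: the upper bound via the embedding $W^r_{p,\alpha}\subset H^r_p$ together with Theorem~\ref{Thm3} at $\theta=\infty$, and the lower bound via Belinsky's estimate \cite{Bel_98} for $e_m(W^r_{p,\alpha})_\infty$ combined with $\|\cdot\|_{B_{\infty,1}}\gg\|\cdot\|_\infty$ from \eqref{n2}. Your explicit verification of the exponent identity $\min\{r,\,r-1/p+1/2\}=r-(1/p-1/2)_+$ is a small bookkeeping step the paper leaves implicit, but it changes nothing in substance.
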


\noindent\textbf{Acknowledgement.} KP would like to acknowledge support by the Philipp Schwartz Fellowship of the Alexander von Humboldt Foundation.

\end{document}